\newtheorem{theorem}{Theorem}[section]
\newtheorem{corollary}[theorem]{Corollary}
\newtheorem{lemma}[theorem]{Lemma}
\newtheorem{proposition}[theorem]{Proposition}
\theoremstyle{remark}
\newtheorem{remark}[theorem]{Remark}
\newcommand{\CS}{\mathcal{S}}
\newcommand{\CW}{\mathcal{W}}
\newcommand{\rmW}{\mathrm{W}}
\newcommand{\BBP}{{\mathbb P}}
\newcommand{\BBQ}{{\mathbb Q}}
\newcommand{\BBZ}{{\mathbb Z}}
\newcommand{\Hom}{\mathrm{Hom}}
\newcommand{\Ima}{\mathrm{Im}}
\newcommand{\Ker}{\mathrm{Ker}}
\newcommand{\Coker}{\mathrm{Coker}}
\newcommand{\End}{\mathrm{End}}
\begin{document}

\title{A mixed version for a Fuchs' Lemma}

\author{Simion Breaz}

\address[bodo@math.ubbcluj.ro]{"Babe\c s-Bolyai" University, Faculty of Mathematics and Computer Science, Str. Mihail Kog\u alniceanu 1, 400084, Cluj-Napoca, Romania}




\begin{abstract}
We prove a version for mixed groups for a Fuchs' result about connections between the cancellation property of a group and the unit lifting property of its (Walk-)endomorphism rings.   
\end{abstract}


 \subjclass[2010]{Primary: 20K21; Secondary: 20K25, 20K30.}

\keywords{Cancellation property, unit lifting property, self-small abelian group.}

\maketitle

\centerline{\textit{Dedicated to L\'aszl\'o Fuchs on the occasion of his 95th anniversary}}

\section{Introduction}
In this paper all groups are abelian. A group $G$ has the \textit{cancellation property} if whenever $H$ and $K$ are  groups such that $G\oplus H\cong G\oplus K$ it follows that $H\cong K$. We refer to \cite{Fuchs-2015} for details about the history of the study of the cancellation property, and to \cite{MS} and \cite{PH} for recent results connected to this property.
An important question is whether we can find classes of groups that have the cancellation property and can they be characterized by using ring theoretical properties of the corresponding endomorphism rings. For instance, L\'aszl\'o Fuchs introduced in \cite{Fuchs71} \textit{the substitution property}: the group $G$ has the substitution property if for every group $A$ which has direct decompositions $A=G_1\oplus H=G_2\oplus K$ such that $G_1\cong G_2\cong G$ there exists $G_0\leq A$ such that $G_0\cong G$ and  $A=G_0\oplus H=G_0\oplus K$. Every group with the substitution property has the cancellation property. Warfield proved in \cite{Warf} that a group $G$ has the substitution property if and only if the endomorphism ring of $G$, $\End(G)$, has $1$ in the stable range. The reader can find more details about this condition in \cite{Ar}, \cite{Fuchs-2015}, and \cite{Facchini}. In particular, groups (modules) with semilocal endomorphism rings have the substitution property. 

A ring $R$ has \textit{the unit lifting property} if  for all positive integers $n$ the units in $R/nR$ lift to units in $R$. It was proved by Stelzer in \cite[Theorem A]{Stel} that if a finite rank torsion-free group $G$ without free direct summands has the cancellation property then $\End(G)$ has the unit lifting property. As a consequence, he concludes that if the finite rank torsion-free group $G\ncong \BBZ$ is strongly indecomposable (i.e. the ring $\BBQ\End(G)$ is local) then $G$ has the cancellation property if and only if it has the substitution property. Blazhenov used Stelzer's theorem in \cite[Theorem 21]{Bla} to prove that a finite rank torsion-free group $G$ without free direct summands has the cancellation property if and only if (i) for every positive integer $n$ the units of $\End(G)/n\End(G)$ can be lifted to units of $\End(G)$, and  (ii) the endomorphism rings of all quasi-direct summands of $G$ satisfy the conclusion of two theorems proved by Eichler in the 1930s. 

In fact Stelzer proved that if $G$ is a torsion-free group of finite rank, and it has no direct summands isomorphic to $\BBZ$ or $\BBQ$ then there exists a group $A$ such that  (a) $\Hom(A,G)=0=\Hom(G,A)$, (b) $\End(A)\cong \BBZ$, and (c) for every positive integer $n$ there is an epimorphism $A\to G/nG$.
Then the conclusion comes from a result of Fuchs, \cite{Fuchs-canc}, which states that if $G$ has this  property and the cancellation property then $\End(G)$ has the unit lifting property. 

The aim of this paper is to apply the technique introduced by Fuchs in \cite{Fuchs-canc} and presented by Arnold in \cite[Section 8]{Ar} to self-small mixed groups of finite torsion-free rank. Even the study of some decomposition properties is difficult for mixed groups, \cite[p.13, Remark (e)]{Ka}, the restriction of such investigations to \textit{the class $\CS$} of self-small groups of finite torsion-free rank has many advantages. For instance, if $G\in \CS$ then \textit{the Walk-endomorphism ring} $\End_\mathrm{W}(G)=\End(G)/\Hom(G,T(G))$ is torsion-free of finite rank and every idempotent from $\End_\mathrm{W}(G)$ can be lifted to an idempotent endomorphism of $G$, see \cite[Theorem 3.4]{Br-semi} and \cite[Corollary 2.4]{ABW-09}. These properties were used to prove that that if the Walk-endomorphism ring of a group $G\in \CS$ is semi-local then $G$ has the cancellation property, \cite{Br-semi}. Moreover, every group in $\CS$ has a unique, up to quasi-isomorphism, quasi-decomposition as a direct sum of strongly indecomposable self-small groups, \cite{Br-qd}. We refer to \cite{ABW-09} and to \cite{Br-S} for other properties of self-small groups.

The main difference in the case of mixed groups $G$ is that there may be no groups $A$ that satisfy the conditions (a)--(c) described above. We will prove in Proposition \ref{f-lemma} that these conditions can be modified so that the Fuchs' technique still works. In the end of the paper we will prove, by using some similar techniques to \cite{ABVW},  that this version of Fuchs' lemma can be applied to (mixed) quotient-divisible groups. 

The set of all primes is denoted by $\mathbb{P}$. If $G$ is a group then $T(G)$ will be the torsion part of $G$, $T_p(G)$ will denote the $p$-component of $G$ ($p\in \mathbb{P}$), and we write $\overline{G}=G/T(G)$. If $P\subseteq \mathbb{P}$  then $T_P(G)=\oplus_{p\in P}T_p(G)$, and if $n$ is a positive integer we will denote by $T_n(G)$ the subgroup $\oplus_{p\mid n,\ p\in\BBP}T_n(G)$. If $f\in \End(G)$ then $\overline{f}\in \End_\rmW(G)$ represents the coset of $f$ modulo $\Hom(G,T(G))$.

\section{Self-small groups}\label{sect-self-small}

A group $G$ is \textit{self-small} if for every index set $I$, the natural homomorphism $\Hom(G, G)^{(I)}\to \Hom(G, G^{(I)})$ is an isomorphism. We denote by $\CS$ the class of self-small groups of finite torsion-free rank. 

\begin{theorem}\cite[Theorem 2.1]{ABW-09}\label{char-ss}
Let $G$ be a group of finite torsion-free rank. The following are equivalent:
\begin{enumerate}[{\rm 1)}]
 \item $G\in \CS$;
 \item for all $p\in\BBP$ the $p$-components $T_p(G)$ are finite, and $\Hom(G,T(G))$ is a torsion group;
 \item for every $p\in\BBP$ the $p$-component $T_p(G)$ is finite and if $F_G\leq G$ is a full free subgroup of $G$ then $G/F_G$ is $p$-divisible for almost all $p\in\BBP$ such that $T_{p}(G)\neq 0$.
\end{enumerate}
\end{theorem}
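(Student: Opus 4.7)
My plan is to prove the equivalences $(2) \Leftrightarrow (3)$ and $(3) \Leftrightarrow (1)$. The common setup: for $f \in \Hom(G, T(G))$, decompose $f = \sum_p f_p$ where $f_p = \pi_p f : G \to T_p(G)$. Since $F_G$ is finitely generated (being a full free subgroup of a group of finite torsion-free rank) and $T(G) = \bigoplus_p T_p(G)$, the image $f(F_G)$ has finite support in primes, so $f_p|_{F_G} = 0$ for almost all $p$. When each $T_p(G)$ is finite, $f$ is torsion if and only if $\{p : f_p \neq 0\}$ is finite.

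For $(3) \Rightarrow (2)$: for almost all $p$ we have $f_p|_{F_G} = 0$ and, by (3), $G/F_G$ is $p$-divisible. Then $f_p$ factors through $G/F_G$, and its image is a $p$-divisible subgroup of the finite $p$-group $T_p(G)$, hence zero. Thus $f$ is torsion. Conversely, $(2) \Rightarrow (3)$ is proved by contrapositive: if infinitely many primes $p$ satisfy $T_p(G) \neq 0$ and $G/F_G$ not $p$-divisible, pick a nonzero $f_p : G \to T_p(G)$ factoring through a cyclic quotient of $T_p(G/F_G)$ for each such $p$; then $f = \sum_p f_p$ is a well-defined, non-torsion element of $\Hom(G, T(G))$.

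For $(3) \Rightarrow (1)$: let $\phi : G \to G^{(\omega)}$. Since $\phi(F_G)$ is finitely generated, $\phi(F_G) \subseteq G^{(N)}$ for some $N$, and the projection onto coordinates beyond $N$ gives $\phi' : G \to T(G)^{(\omega)}$ factoring through $G/F_G$. The key structural observation is $T_p(G/F_G) \cong \BBZ(p^\infty)^{s_p} \oplus R_p$ with $R_p$ finite: this holds because the $p$-torsion of $\overline{G}/F_G$ embeds in $\BBZ(p^\infty)^r$ (with $r$ the torsion-free rank), and subgroups of $\BBZ(p^\infty)^r$ all have this form by $\BBZ_p$-duality with finitely generated $\BBZ_p$-modules. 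Outside the finite set $S$ of bad primes from (3), either $T_p(G) = 0$ or $T_p(G/F_G)$ is $p$-divisible, so the restriction of $\phi'$ vanishes on $T_p(G/F_G)$ because $T_p(G)^{(\omega)}$ is bounded. For $p \in S$, the divisible summand still maps to zero and only the finite reduced part $R_p$ contributes, giving image in some $T_p(G)^{(M_p)}$. Aggregating over $S$ yields $\phi'$ with image in $T(G)^{(M)}$ for some $M$, and thus $\phi$ has image in $G^{(\max(N,M))}$.

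For $(1) \Rightarrow (3)$: direct summands of self-small groups are self-small, and $\BBZ(p^\infty)$ is not self-small, as the consistent sequence $a_n = (g_{n-1}, g_{n-2}, \ldots, g_0, 0, 0, \ldots) \in \BBZ(p^\infty)^{(\omega)}[p^n]$ (with $p g_i = g_{i-1}$, $p g_0 = 0$) defines a homomorphism $\BBZ(p^\infty) \to \BBZ(p^\infty)^{(\omega)}$ with unbounded coordinate support. Hence no $T_p(G)$ has a divisible summand. The reduced, infinite case for $T_p(G)$ is handled by using the infinite basic subgroup and extending a spreading endomorphism to $G$ along the pure embedding $T_p(G) \hookrightarrow G$ via pure-injectivity; this forces $T_p(G)$ finite. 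The $p$-divisibility clause of (3) then follows by contrapositive: infinitely many bad primes would assemble, as in $(2) \Rightarrow (3)$, into a non-torsion $f \in \Hom(G, T(G))$, and shifting its image into $T(G)^{(\omega)} \subseteq G^{(\omega)}$ produces a map outside $\Hom(G,G)^{(\omega)}$. The main obstacle is the reduced infinite case of $T_p(G)$: without a divisible summand to exploit, producing an explicit homomorphism $G \to G^{(\omega)}$ outside $\Hom(G,G)^{(\omega)}$ requires delicate control over the pure embedding $T_p(G) \hookrightarrow G$, using pure-injectivity of its $p$-adic completion.
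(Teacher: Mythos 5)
This theorem is quoted in the paper from \cite[Theorem 2.1]{ABW-09}; the paper itself gives no proof, so your attempt can only be judged on its own merits. Your treatment of $(2)\Leftrightarrow(3)$ is correct: the decomposition $f=\sum_p f_p$ is well defined because $G/F_G$ is torsion, the $p$-divisible image inside a finite $p$-group must vanish, and the assembled $f$ in the converse direction is non-torsion because infinitely many primes divide any annihilator. Your $(3)\Rightarrow(1)$ is also sound, though the claim $T_p(G/F_G)\cong \BBZ(p^\infty)^{s_p}\oplus R_p$ needs one more sentence: you only justify it for $T_p(\overline{G}/\overline{F_G})\hookrightarrow\BBZ(p^\infty)^r$, and $T_p(G/F_G)$ is an extension of that group by the finite group $T_p(G)$; you should note that a $p$-group with finite socle (equivalently, satisfying the minimum condition) is again divisible-of-finite-rank plus finite.

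The genuine gap is in $(1)\Rightarrow(3)$, at the step ``$G$ self-small implies $T_p(G)$ finite'' when $T_p(G)$ is reduced and unbounded --- and you concede as much in your last sentence. Your proposed mechanism does not work as stated: to extend a spreading map $T_p(G)\to T_p(G)^{(\omega)}$ along the pure embedding $T_p(G)\hookrightarrow G$ you would need the \emph{target} to be pure-injective, but $T_p(G)^{(\omega)}$ (and $G^{(\omega)}$) is not pure-injective when $T_p(G)$ is unbounded; invoking the pure-injectivity of the $p$-adic completion only yields a homomorphism from $G$ into that completion, not into the direct sum $G^{(\omega)}$, so it does not contradict self-smallness. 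The naive alternative --- using that each finite subsum of a basic subgroup is a bounded pure subgroup, hence a direct summand of $G$, and assembling the coordinate projections $\pi_n:G\to\langle b_n\rangle$ into $g\mapsto(\pi_n(g))_n$ --- also fails, because for an element of infinite order the projections need not be eventually zero, so the map lands in the product rather than the sum. Some additional idea is required here (this is where the finite torsion-free rank hypothesis must be exploited against the unbounded basic subgroup), and without it the implication $1)\Rightarrow 2),3)$ is not established. The divisible case ($\BBZ(p^\infty)$ not self-small), the bounded infinite case, and the final divisibility clause of $(3)$ are all handled correctly.
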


Let $G\in\CS$. It follows that the Walk-endomorphism ring of $G$ is the quotient ring $\End_\rmW(G)=\End(G)/T(\End(G))$, and it is torsion-free of finite rank. 
Moreover, for every positive integer $n$ the subgroup $T_n(G)$ is a direct summand of $G$, and the image of every homomorphism $G\to T(G)$ is finite.
\textit{We fix a direct decomposition $G=T_n(G)\oplus G(n)$, and we denote by $\pi_n:G\to G(n)$ and $\upsilon_n:G(n)\to G$ the canonical projection and the canonical injection induced by this decomposition.} 
We note that from the proof of \cite[Proposition 1.1]{Br-semi} it follows that $\End_\rmW(G)$ is $p$-divisible if and only if $G(p)$ is $p$-divisible, and this is equivalent to $\overline{G}$ is $p$-divisible.

\begin{lemma}\label{end-ss} Let $G\in \CS$. If  $k$ is a positive integer and $\theta:G\to G(k)$ is an epimorphism then $\Ker(\theta)=T_k(G)$ and the induced map $\pi_k\theta\upsilon_k:G(k)\to G(k)$ is an isomorphism. 
\end{lemma}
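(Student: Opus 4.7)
The plan is to unpack the decomposition $G=T_k(G)\oplus G(k)$ and show that $\theta$ ``splits'' into a trivial map on $T_k(G)$ and a surjective endomorphism of $G(k)$, which we then force to be an isomorphism.

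\textbf{Step 1: $T_k(G)\subseteq \Ker(\theta)$.} By construction, $T(G(k))=\oplus_{p\nmid k}T_p(G(k))$, so $G(k)$ is $p$-torsion-free for every prime $p\mid k$. Since $\theta$ is a group homomorphism, the image of any element of $T_p(G)$ with $p\mid k$ is a $p$-torsion element of $G(k)$, hence zero. Therefore $T_k(G)\subseteq \Ker(\theta)$.

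\textbf{Step 2: $\phi:=\pi_k\theta\upsilon_k$ is surjective.} For $y\in G(k)$, pick any $x\in G$ with $\theta(x)=y$ and decompose $x=t+g$ along $T_k(G)\oplus G(k)$. By Step 1, $\theta(t)=0$, so $\phi(g)=\theta(g)=y$.

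\textbf{Step 3: $\phi$ is injective.} This is the main obstacle and uses the self-small hypothesis crucially via Theorem~\ref{char-ss}. Passing to the torsion-free quotient $\overline{G(k)}$, the induced map $\bar\phi:\overline{G(k)}\to\overline{G(k)}$ is a surjective endomorphism of a torsion-free group of finite rank; tensoring with $\BBQ$ gives a surjective (hence bijective) linear endomorphism of a finite-dimensional $\BBQ$-vector space, and because $\overline{G(k)}$ is torsion-free, $\bar\phi$ itself is injective. It follows that $\phi^{-1}(T(G(k)))=T(G(k))$, so $\phi$ restricts to a surjective endomorphism of $T(G(k))$. Decomposing along primary components and using that $\phi$ preserves each $T_p(G(k))$, the decomposition of any preimage of $y\in T_p(G(k))$ shows $\phi|_{T_p(G(k))}$ is surjective; since $T_p(G(k))$ is finite (by Theorem~\ref{char-ss}), this restriction is an automorphism. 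Hence $\phi$ is bijective on $T(G(k))$. Combining the two pieces: if $\phi(x)=0$, then $\bar x=0$, so $x\in T(G(k))$, and then $x=0$ by injectivity on torsion. Thus $\phi$ is an isomorphism.

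\textbf{Step 4: $\Ker(\theta)\subseteq T_k(G)$.} If $\theta(x)=0$ and $x=t+g$ as in Step~2, then $\phi(g)=0$, and injectivity of $\phi$ gives $g=0$, so $x=t\in T_k(G)$. Combined with Step~1, $\Ker(\theta)=T_k(G)$, and with Step~3 this completes the proof.
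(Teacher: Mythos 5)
Your proof is correct and rests on exactly the same two pillars as the paper's: a surjective endomorphism of the finite-rank torsion-free quotient is injective, and surjectivity on the finite $p$-components ($p\nmid k$) forces bijectivity there; the only difference is cosmetic reorganization (you reduce to $\phi=\pi_k\theta\upsilon_k$ first and deduce $\Ker(\theta)=T_k(G)$ last, while the paper argues on $\theta$ directly). No gaps.
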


\begin{proof}
Since $\theta$ is an epimorphism it follows that $$\overline{\theta}:G/T(G)\to G(k)/T(G(k)),\ \overline{\theta}(g+T(G))=\theta(g)+T(G(k))$$ is an epimorphism. Since $G/T(G)\cong G(k)/T(G(k))$ is torsion-free of finite rank, we obtain that $\overline{\theta}$ is an isomorphism, so for every element $g\in G$ of infinite order the image $\theta(g)$ is of infinite order. Therefore, for every prime $p\nmid k$ we have $\theta^{-1}(T_p(G(k)))=T_p(G)$. Since all $p$-components of $G$ are finite, we obtain that the induced morphism $\theta:T_p(G)\to T_{p}(G(k))$ is an isomorphism. From all these we obtain $\Ker(\theta)=T_k(G)$. The last statement is now obvious. 
\end{proof}

\section{The lifting property for groups with the cancellation property}

The main aim of this section is to prove a version for the class $\CS$ of Fuchs' Lemma presented in \cite[Lemma 8.10]{Ar}. The main idea used by Fuchs is that the pullback $M$ induced by the canonical projection $G\to G/nG$ and an epimorphism $H\to G/nG$ can be perturbed by using a unit of the ring $\End(G)/n\End(G)$. We obtain a group $M'$ such that $G\oplus M\cong G\oplus M'$ and the cancellation property together with the properties of $H$ lead to the conclusion that $\alpha$ can be lifted to an endomorphism of $G$. This technique was also used for the study of cancellation properties of finitely generated modules over noetherian domains, \cite{Wie}. 

We say that an epimorphism $\alpha:H\to L$ is \textit{rigid} if for every commutative diagram 
$$\xymatrix{ H \ar[r]^{\alpha}\ar[d]^\psi & L\ar[d]^\phi \\
H\ar[r]^\alpha & L
}$$ 
such that $\psi$ and $\phi$ isomorphisms we have $\phi=\pm 1_L$. 
It is easy to see that if $\End(H)\cong \BBZ$ then all epimorphisms $H\to L$ are rigid.

The promised mixed version for Fuchs' lemma is the following:  

\begin{proposition}\label{f-lemma}
Let $G$ be a self-small group of finite torsion-free rank. Suppose that $n$ is a positive integer such that there exists a torsion-free group $H$ with the following properties 
\begin{enumerate}[{\rm (I)}]
 \item there exists a rigid epimorphism $\alpha:H\to G(n)/nG(n)$,
 \item $\Hom(G,H)=0$, and
 \item $\Hom(H,G)$ is a torsion group.
\end{enumerate}
If $G$ has the cancellation property then every unit of $\End_{\mathrm{W}}(G)/n\End_{\mathrm{W}}(G)$ lifts to a unit of $\End_{\mathrm{W}}(G)$.    
\end{proposition}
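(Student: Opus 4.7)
The plan is to adapt Fuchs' pullback technique \cite[Lemma 8.10]{Ar} to the self-small mixed setting. I would first represent a unit of $\End_\rmW(G)/n\End_\rmW(G)$ by an element $\overline{u}\in\End_\rmW(G)$ with a chosen lift $u\in\End(G)$, and then restrict to the summand $G(n)$ via $u_0=\pi_n u\upsilon_n\in\End(G(n))$. A useful preliminary observation is that $T(G(n))$ has no $p$-torsion for primes $p\mid n$, hence is $n$-divisible, hence contained in $nG(n)$; as a consequence the natural evaluation map
$$\End_\rmW(G(n))/n\End_\rmW(G(n))\longrightarrow\End(G(n)/nG(n))$$
is well-defined, and sends $\overline{u_0}$ to an automorphism $\bar u$ of $G(n)/nG(n)$.

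Next, I would form the pullback $M=\{(x,h)\in G(n)\oplus H:\pi(x)=\alpha(h)\}$ together with its $\bar u$-twist $M'=\{(x,h)\in G(n)\oplus H:\pi(x)=\bar u\alpha(h)\}$. Picking $v_0,w_0\in\End(G(n))$ so that $v_0u_0\equiv 1+nw_0\pmod{\Hom(G(n),T(G(n)))}$, a Fuchs-style matrix formula should yield an explicit isomorphism $\Phi\colon G(n)\oplus M\xrightarrow{\cong}G(n)\oplus M'$. Adjoining the finite summand $T_n(G)$ to both sides gives $G\oplus M\cong G\oplus M'$, and the cancellation property of $G$ then produces an isomorphism $\rho\colon M\to M'$.

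The final step is to extract from $\rho$ the desired lift. Using that $\Hom(G,H)=0$, the isomorphism $\rho$ should respect the $H$-quotients and the $G(n)$-subquotients of the pullbacks up to an error controlled by $\Hom(H,G)$; since the latter is a torsion group and $T(G(n))\subseteq nG(n)$, this error collapses after passage to $G(n)/nG(n)$. One therefore obtains a commutative square
$$\xymatrix{H \ar[r]^-{\alpha} \ar[d]_-{\psi} & G(n)/nG(n) \ar[d]^-{\bar\phi} \\ H \ar[r]_-{\bar u\,\alpha} & G(n)/nG(n)}$$
with automorphisms $\psi$ of $H$ and $\bar\phi$ of $G(n)/nG(n)$. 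Rewriting as $(\bar u^{-1}\bar\phi)\alpha=\alpha\psi$ and applying the rigidity of $\alpha$ forces $\bar u^{-1}\bar\phi=\pm 1$, i.e.\ $\bar u=\pm\bar\phi$. Since $\bar\phi$ is the reduction of an actual automorphism of $G(n)$ (which extends to an automorphism of $G$ by the identity on $T_n(G)$), this exhibits $\overline u$ as (a sign times) the class of a genuine Walk-unit, completing the lifting.

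The main obstacle is this third step. In Fuchs' original argument the condition $\Hom(H,G)=0$ makes the descent of $\rho$ to a commuting square essentially automatic; here the weaker hypothesis only controls homomorphisms $H\to G$, while $\rho$ a priori produces homomorphisms from subgroups such as $nG(n)\cap M\to H$ or $\Ker\alpha\to G(n)$ that need not vanish. The resolution must exploit the Walk-quotient and the $n$-divisibility of $T(G(n))$ to absorb these errors in $G(n)/nG(n)$; keeping the bookkeeping clean, and verifying that the resulting $\bar\phi$ actually arises from a Walk-unit of $\End_\rmW(G)$ rather than merely from its reduction modulo $n$, is the delicate point.
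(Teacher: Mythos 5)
Your plan follows the paper's route almost exactly (twist one leg of the pullback by the unit, show $G\oplus M\cong G\oplus M'$, cancel, then descend the resulting isomorphism $M\cong M'$ to a commutative square and invoke rigidity), but the step you yourself flag as ``the main obstacle'' --- extracting the commuting square from $\mu:M'\xrightarrow{\ \cong\ }M$ and verifying that the induced map on $G$ is a genuine Walk-unit --- is precisely the technical heart of the proof, and you leave it unresolved. That is a real gap, not just bookkeeping. The paper fills it in two moves (Lemma \ref{M+G}(b)). First, since $nG\cong G$ (after reducing to $T_n(G)=0$) and $\Hom(G,H)=0$, the isomorphism $\mu$ must carry the copy of $nG$ in $M'$ into the copy in $M$, inducing $\mu_1:nG\to nG$ and $\theta_H:H\to H$; comparing ranks shows $\Coker(\mu_1)\cong\Ker(\theta_H)$ is torsion, and torsion-freeness of $H$ then forces both to be isomorphisms. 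Second --- and this is the part your sketch does not capture --- the composite $\Ker(\beta')\to M'\xrightarrow{\mu}M\xrightarrow{\beta}G$ is an element of finite order in $\Hom(\Ker(\alpha),G)$ (because $\Ker(\alpha)$ has finite index in $H$ and $\Hom(H,G)$ is torsion), so its image lands in $T_k(G)$ for some auxiliary integer $k$ that has nothing to do with $n$. Killing it with $\pi_k$ produces an epimorphism $\theta_{G(k)}:G\to G(k)$, and Lemma \ref{end-ss} (any epimorphism $G\to G(k)$ has kernel exactly $T_k(G)$ and restricts to an automorphism of $G(k)$) is what guarantees that $\theta_G=\upsilon_k\theta_{G(k)}$ is a unit of $\End_\rmW(G)$.

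Two consequences for your write-up. Your assertion that $\bar\phi$ ``is the reduction of an actual automorphism of $G(n)$'' is stronger than what the argument yields and is not needed: one only obtains a Walk-unit $\theta_G$ (with kernel $T_k(G)$ and image $G(k)$), which is exactly enough because the conclusion is about lifting to units of $\End_\rmW(G)$, not of $\End(G)$. And the final identity to aim for is $\rho\,\theta_G=\phi\,\rho'=\pm\rho'$ (equivalently $\overline{\theta_G}\equiv\pm\overline{f}$ modulo $n\End_\rmW(G)$), which exhibits the lift directly; your formulation via $\bar u=\pm\bar\phi$ is equivalent but obscures where the Walk-unit comes from. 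Until the descent step above is actually carried out, the proposal is a correct outline rather than a proof.
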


\begin{proof}
It is enough to assume that $nG(n)\neq G(n)$ (otherwise the ring $\End_{\mathrm{W}}(G)/n\End_{\mathrm{W}}(G)$ is trivial).
If $f$ is an endomorphism of $G$ then $f$ and $(0_{T_n(G)}\oplus \pi_n f \upsilon_n)$ induce the same Walk-endomorphism of $G$. It follows that all units of $\End_\rmW(G)/ n\End_\rmW(G)$ can be lifted to units of $\End_\rmW(G)$ if and only if all units of $\End_\rmW(G(n))/n\End_\rmW(G(n))$ can be lifted to units of $\End_\rmW(G(n))$. Hence \textit{we can suppose w.l.o.g that $T_n(G)=0$}. We will work in the following setting.

\smallskip

\noindent\textbf{Setting:}
\begin{enumerate}[{\rm (i)}] 
 \item $T_n(G)=0$;
\item we fix two endomorphisms $f,g\in\End(G)$ such that for the induced Walk-endomorphisms $\overline{f}$ and $\overline{g}$ we have $\overline{f}\, \overline{g}+n\End_\rmW(G)=\overline{g}\, \overline{f}+n\End_\rmW(G)=\overline{1_G}+n\End_\rmW(G);$ 
\item $\rho:G\to G/nG$ is the canonical projection and $\rho'=\rho f$.
\end{enumerate}

\begin{lemma}\label{lema-proiectii}
The morphism $\rho'$ is surjective, and $\Ker(\rho')=nG$.  
\end{lemma}

\begin{proof}
There exists $h\in \End(G)$ such that $\overline{f}\, \overline{g}=\overline{1_G}+n\overline{h}$, hence the image of 
$fg-1_G+nh$ is a torsion subgroup of $G$. Since $T(G)$ is $n$-divisible, it follows that the image of $fg-1_G$ 
is contained in $nG$. Then $\rho(fg-1_G)=0$, so $\rho'g=\rho fg =\rho$. 
The inclusion $nG\subseteq \Ker(\rho')$ is obvious. Conversely, if $\rho'(x)=0$ then $f(x)\in nG$, so $gf(x)\in nG$. Since $(fg-1_G)(G)\subseteq nG$, it follows that $x\in nG$, and the proof is complete.
\end{proof}

\begin{lemma}\label{M+G}
Suppose that $\alpha:H\to G/nG$ is an epimorphism (not necessarily rigid). Let $M$ be the pullback of the diagram $G\overset{\rho}\longrightarrow G/nG\overset{\alpha}\longleftarrow H$ and let $M'$ be the pullback of the diagram $G\overset{\rho'}\longrightarrow G/nG\overset{\alpha}\longleftarrow H$. 
\begin{enumerate}[{\rm (a)}]
 \item $G\oplus M\cong G\oplus M'$;
 \item If $M\cong M'$ and  $H$ satisfies the conditions {\rm (II)} and {\rm (III)} then there exist  $\theta_H:H\to H$,  $\theta_G:G\to G$, and $\phi:G/nG\to G/nG$ such that:
\begin{enumerate}[{\rm (i)}]
 \item $\theta_H$ and $\phi$ are automorphisms and $\theta_G$ is a unit of $\End_{\rmW}(G)$;
 \item $\alpha\theta_H=\phi\alpha$ and $\rho\theta_G=\phi\rho'$.
\end{enumerate}
\end{enumerate}
\end{lemma}

\begin{proof}
(a) The above mentioned pullbacks induce the solid part of the following commutative diagram:

\[\xymatrix{
0 \ar[r] & nG\ar[r] \ar@{=}[d] & M \ar[r]^{\gamma}\ar[d]^{\beta}\ar@{-->}@/^1.5pc/[ddd]^{\sigma}
\ar@{}[dr]|{(\mathrm{A})}  & H\ar[d]^{\alpha}\ar[r] & 0 \\
0 \ar[r] & nG\ar[r]  & G \ar[r]^{\rho} \ar@/^/[d]^{g} & G/nG \ar[r]\ar@{=}[d] & 0 \\
0 \ar[r] & nG\ar[r]  & G \ar[r]^{\rho'}\ar@/^/[u]^{f} \ar@{}[dr]|{(\mathrm{B})} & G/nG \ar[r] & 0 \\
0 \ar[r] & nG\ar[r] \ar@{=}[u] & M' \ar[r]^{\gamma'}\ar[u]^{\beta'}\ar@{-->}@/^1.5pc/[uuu]^{\sigma'}  & H\ar[u]^{\alpha}\ar[r] & 0 \\
}.\]
Using the pullback square $(\mathrm{A})$ and the equalities $\rho f \beta'=\rho'\beta'=\alpha\gamma'$ it follows that there exists $\sigma':M'\to M$ such that $f\beta'=\beta\sigma'$ and $\gamma'=\gamma\sigma'$. In the same way, using the square $(\mathrm{B})$ we obtain a morphism $\sigma: M\to M'$ such that $\beta'\sigma=g\beta$ and $\gamma=\gamma'\sigma$.

If we construct the pullback of the top and the bottom short exact sequences from the previous diagram we obtain the commutative diagram 

\[\xymatrix{
 &   nG \ar@{=}[r]\ar@{>->}[d]  & nG\ar@{>->}[d]   \\
nG\ar@{>->}[r]  & K \ar@{->>}[r]\ar@{->>}[d]  & M'\ar@{->>}[d]^{\gamma'}\ar@{-->}@/^/[dl]^{\sigma'} \\
 nG\ar@{>->}[r] \ar@{=}[u] & M \ar@{->>}[r]_{\gamma}  \ar@{-->}@/^/[ur]^{\sigma}  & H \\
}\]
whose horizontal and vertical lines are short exact sequences. Since $\gamma=\gamma'\sigma$ and $\gamma'=\gamma\sigma'$, it follows that the top horizontal short exact sequence and the left side vertical short exact sequence split. Then $K\cong nG\oplus M\cong nG\oplus M'$. 


(b) 
Let $\mu:M'\to M$ be an isomorphism. 
Since $\Hom(G,H)=0$, the solid part of the diagram 
\[\xymatrix{
0 \ar[r] & nG\ar[r]\ar@{-->}[d]^{\mu_1}  & M' \ar[r]^{\gamma'}\ar[d]^{\mu}  & H\ar[r]\ar@{-->}[d]^{\theta_H} & 0 \\
0 \ar[r] & nG\ar[r]  & M \ar[r]^{\gamma}  & H\ar[r] & 0 
},\]
can be completed with $\theta_H:H\to H$ and $\mu_1:nG\to nG$
such that the obtained diagram is commutative. 
Let $\mu_1:nG\to nG$ be the induced map such that the above diagram is commutative. Since $\mu_1$ is injective, it follows that $r_0(\Ima(\mu_1))=r_0(nG)$, hence $\Coker(\mu_1)\cong \Ker(\theta_H)$ is a torsion group. But $H$ is torsion-free, so we obtain $\Coker(\mu_1)\cong \Ker(\theta_H)=0$. We obtain that $\theta_H$ and $\mu_1$ are isomorphisms.

Let $\iota_{\Ker(\beta)}:\Ker(\beta)\to M$ and $\iota_{\Ker(\beta')}:\Ker(\beta')\to M'$ be the inclusion maps. 
Since $\Ker(\beta')\cong \Ker(\alpha)$ and $\Ker(\alpha)$ is a subgroup of finite index in $H$, it follows that $\Hom(\Ker(\beta'),G)$ is a torsion group. Then 
 $\beta\mu\iota_{\Ker(\beta')}$ is of finite order, and there exists a positive integer $k$ such that $\beta\mu\iota_{\Ker(\beta')}(\Ker(\beta'))\leq T_k(G)$. Since $\pi_k\beta\mu\iota_{\Ker(\beta')}=0$, there exists $\theta_{G(k)}:G\to G(k)$ such that $\theta_{G(k)}\beta'=\pi_k\beta\mu$. Moreover, there exists $\mu_2:\Ker(\beta')\to \Ker(\beta)$ such that the diagram 
\[\xymatrix{
0 \ar[r] & \Ker(\beta')\ar[r]^{\iota_{\Ker(\beta')}}\ar@{-->}@/_1.5pc/[dd]_(.3){\mu_2}  & M' \ar[r]^{\beta'}\ar[d]^{\mu}  & G\ar[r]\ar@{-->}@/^1.5pc/[dd]^(.3){\theta_{G(k)}} & 0 \\
0 \ar[r] & \Ker(\beta)\ar[r]^{\iota_{\Ker(\beta)}}  & M \ar[r]^{\beta}\ar@{=}[d]  & G\ar[r]\ar[d]_{\pi_k} & 0 \\
0 \ar[r] & \Ker(\pi_k\beta)\ar[r]  & M \ar[r]^{\pi_k\beta}  & G(k)\ar[r] & 0
}\]
is commutative. From Lemma \ref{end-ss} we obtain that $\Ker(\theta_{G(k)})\cong T_k(G)$, and that the restriction of $\theta_{G(k)}$ to $G(k)$ is an isomorphism. Then $\theta_G=\upsilon_k\theta_{G(k)}$ is a unit of $\End_{\rmW}(G)$, where $\upsilon_k:G(k)\to G$ is the inclusion map.

We constructed $\theta_H$ and $\theta_G$ such that (i) is valid. In order to prove that they verify (ii),  we include all these data in the following diagram 

\[\xymatrix{
 \Ker(\beta')\ar@{>->}[d]_{\iota_{\Ker(\beta')}} \ar@{=}[rr] && \Ker(\alpha)\ar@{>->}[d]^{\iota_{\Ker(\alpha)}}\\
 M'\ar@{->>}[rr]^{\gamma'}\ar[d]_{\beta'} \ar@/_2pc/[ddd]_{\mu} && H\ar[d]^{\alpha} \ar@/^2pc/[ddd]^{\theta_H} \\
G\ar[rr]^{\rho'}\ar@{=}[d]\ar@/_/[d]_{f} \ar[rd]^{\theta_{G(k)}} && G/nG\ar@{=}[d]\ar@{=}[d]\ar@{-->}@/^1pc/[d]^{\phi} \\
G\ar[r]^{\pi_k} &G(k) \ar@/^/[l]^{\upsilon_k}\ar[r]^{\widehat{\rho}}& G/nG \\
M\ar[u]^{\beta} \ar@{->>}[rr]^\gamma && H\ar[u]_{\alpha},
 }\]
where $\widehat{\rho}$ is the restriction of $\rho$ to $G(k)$. 
We have 

\begin{align*}\alpha\theta_H\iota_{\Ker(\alpha)}&=\alpha\theta_H\gamma'\iota_{\Ker(\beta')}=\alpha\gamma\mu\iota_{\Ker(\beta')}=\rho\beta\mu\iota_{\Ker(\beta')}\\ &=\widehat{\rho}\pi_k\beta\mu\iota_{\Ker(\beta')}=
\widehat{\rho}\theta_{G(k)}\beta\iota_{\Ker(\beta')}=0,
\end{align*} 
so there exists $\phi:G/nG\to G/nG$ such that $\phi\alpha=\alpha\theta_H$. 
Moreover, $\widehat{\rho}\theta_{G(k)}\beta'=\widehat{\rho}\pi_k\beta\mu=\rho\beta\mu=\alpha\gamma\mu=\alpha\theta_H\gamma'=
\phi\alpha\gamma'=\phi\rho'\beta'.$ But $\beta'$ is surjective, so $\widehat{\rho}\theta_{G(k)}=\phi\rho'$. Then, for $\theta_G=\upsilon_k \theta_{G(k)}$, we have $\rho \theta_G=\phi\rho'$. Since $\phi$ is epic and $G/nG$ is finite, we obtain that $\phi$ is an isomorphism.
\end{proof}

We return to the proof of Proposition \ref{f-lemma}. Let $\theta_G$ and $\phi$ the morphisms constructed in Lemma \ref{M+G}. By (I) $\alpha$ is rigid, and it follows that $\phi=\pm 1_{G/nG}$. Then $\rho\theta_G=\pm \rho'$, and the proof is complete.
\end{proof}

Stelzer proved in \cite[Theorem A]{Stel} that every reduced finite rank torsion-free group $G$ without free direct summands satisfies the hypothesis of Proposition \ref{f-lemma}. Consequently, if $G$ has the cancellation property the endomorphism ring $\End(G)$ has the unit lifting property, \cite[Theorem]{Stel}. Similar results can be extracted  for quotient-divisible groups from the proof of \cite[Theorem 3.4]{ABVW}. We include here a direct proof for this case.
A group of finite torsion-free rank $G$  is \textit{quotient-divisible} if its torsion part is reduced and there exists a full free subgroup $F\leq G$ such that $G/F$ is divisible. It is easy to see that every quotient-divisible group is self-small. We refer to \cite{FoW98} for more details about the structure of mixed quotient-divisible groups. 

\begin{proposition}\label{qd}
Every reduced quotient-divisible group $G$ without free direct summands satisfies the hypothesis of Proposition \ref{f-lemma}. Consequently, if $G$ has the cancellation property then $\End_\rmW(G)$ has the unit lifting property.
\end{proposition}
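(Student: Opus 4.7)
The plan is to verify the hypothesis of Proposition \ref{f-lemma} for every positive integer $n$ with $nG(n)\neq G(n)$, by exhibiting a torsion-free group $H$ of finite rank satisfying conditions (I), (II), and (III). Once this is done, the ``consequently'' clause follows immediately from Proposition \ref{f-lemma}. Exactly as in the opening reduction of the proof of Proposition \ref{f-lemma}, since $G(n)$ is a direct summand of $G$, I may replace $G$ by $G(n)$ and assume $T_n(G)=0$; the summand $G(n)$ is again reduced quotient-divisible without free direct summands. The quotient-divisibility hypothesis supplies a full free subgroup $F\leq G$ with $G/F$ divisible; in particular $G/F=n(G/F)$, so $G=F+nG$ and $G/nG\cong F/(F\cap nG)$ is a finite abelian group of exponent dividing $n$.

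The main step is to construct a torsion-free finite-rank group $H$ satisfying: (a) $\End(H)\cong \BBZ$, so that every epimorphism from $H$ is rigid by the remark preceding Proposition \ref{f-lemma}; (b) there exists an epimorphism $\alpha\colon H\to G/nG$; (c) $\Hom(G,H)=0$; (d) $\Hom(H,G)$ is a torsion group. This is a Pontryagin--Corner-type construction modelled on the proof of \cite[Theorem 3.4]{ABVW}: one prescribes the $p$-adic heights of a chosen generating set of $H$ so as to force $\End(H)$ to collapse to $\BBZ$, and one tunes the global $p$-divisibility profile of $H$ to be incompatible with that of $\overline{G}=G/T(G)$ at a suitable set of primes, while keeping $H$ $p$-divisible for almost all $p$. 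The hypothesis that $G$ has no free direct summand leaves enough flexibility in the divisibility data of $\overline{G}$ to make such an $H$ exist, without obstructing the surjection onto the finite group $G/nG$. Finiteness of each $T_p(G)$ (Theorem \ref{char-ss}), together with the almost-everywhere $p$-divisibility of $H$, then forces $\Hom(H,T(G))$ to be torsion, and this combined with the analysis of the long exact sequence $\Hom(H,T(G))\to\Hom(H,G)\to\Hom(H,\overline{G})$ and the divisibility profile of $H$ delivers (d).

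Once $H$ is produced, (I) follows from (a) and (b), (II) is (c), and (III) is (d); Proposition \ref{f-lemma} then yields that every unit of $\End_\rmW(G)/n\End_\rmW(G)$ lifts to a unit of $\End_\rmW(G)$, as claimed. I expect the main obstacle to be the explicit construction of $H$: its generators' heights must be tuned carefully enough to satisfy all four properties (a)--(d) simultaneously, using the structural constraints on $\overline{G}$ coming from the reduced, quotient-divisible, and no-free-summand hypotheses on $G$. This delicate prime-by-prime construction, combining classical rank-$\ge 2$ indecomposable constructions with the divisibility profile of $\overline{G}$, is the technical core of the argument and is essentially carried out in the proof of \cite[Theorem 3.4]{ABVW}.
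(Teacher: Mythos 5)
Your overall strategy---reduce to $T_n(G)=0$, observe that $G/nG$ is finite, and then produce a torsion-free $H$ satisfying (I)--(III)---matches the paper's, but the step that carries all the weight, namely the construction of $H$, is left unproved, and the specific shape you propose for $H$ does not match what the cited sources actually provide. You want a single torsion-free group $H$ with $\End(H)\cong\BBZ$ mapping onto $G/nG$, so that rigidity comes for free from the remark preceding Proposition \ref{f-lemma}. But \cite[Proposition 3.3]{ABVW}, which is what the paper invokes, supplies a family $\mathcal{W}$ of torsion-free groups $W$ with $\End(W)\cong\BBZ$, $\Hom(G,W)=0$ and $\Hom(W,G)$ torsion, and these groups (built in \cite[Lemma 4.1]{Go-W}) have $p$-rank $1$ for every prime $p$; a single such $W$ can therefore only surject onto a cyclic finite group, whereas $G/nG$ is an arbitrary finite abelian group of exponent dividing $n$. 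There is no construction in the cited literature of a group with endomorphism ring $\BBZ$, prescribed large $p$-ranks, \emph{and} the two Hom-orthogonality conditions relative to the given mixed $G$; asserting that this ``is essentially carried out in the proof of \cite[Theorem 3.4]{ABVW}'' defers precisely the technical core of the proposition.

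The paper instead takes $H=\left(\oplus_{i=1}^{t}W_i\right)\oplus\left(\oplus_{i<j}W_{ij}\right)$ for pairwise distinct members of $\mathcal{W}$ with no nonzero homomorphisms between them, where $W_i$ surjects onto the cyclic summand $\langle u_i\rangle$ of $G(n)/nG(n)$ and $W_{ij}$ surjects onto $\langle u_i+u_j\rangle$. Conditions (II) and (III) pass to this finite direct sum, but $\End(H)$ is now a product of copies of $\BBZ$, not $\BBZ$ itself, so rigidity of $\alpha$ is no longer automatic and must be proved: an automorphism of $H$ is a diagonal sum of signs $\pm1$ on the summands, giving $\phi(u_i)=\pm u_i$ and $\phi(u_i+u_j)=\pm(u_i+u_j)$, and the auxiliary summands $W_{ij}$ are exactly what forces these individual signs to cohere into a single global sign, i.e.\ $\phi=\pm1_{G(n)/nG(n)}$. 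This sign-coherence device via the elements $u_i+u_j$ is the idea your proposal is missing; without it, or without an actual construction of a single rigid $H$ of large $p$-rank orthogonal to $G$, the argument has a genuine gap.
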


\begin{proof} 
It was proved in \cite[Proposition 3.3]{ABVW} that there exists an uncountably family $\CW$ of  torsion-free groups of rank $m$ such that for all $W\in \CW$ we have $\End(W)\cong \BBZ$, 
$\Hom(G,W)=0$, and $\Hom(W,G)$ is a torsion group.
The groups from $\CW$ are constructed in \cite[Lemma 4.1]{Go-W}. They are quotient-divisible, torsion-free, and homogeneous of type $0$. 
If $W\in \CW$ and $V\neq 0$ is a pure subgroup of $W$ then $W/V$ is divisible, \cite[Theorem 2.1]{Go-W}. Then 
$\Hom(W_1,W_2)=0$ for all $W_1,W_2\in \CW$ with $W_1\neq W_2$.

Moreover, if $W\in\CW$ then it is of $p$-rank $1$ for all $p\in \BBP$. It follows that for every finite cyclic group $\langle u\rangle$ there exists an epimorphism $W\to \langle u\rangle$. 

Let $n$ be positive integer. Then $G(n)/nG(n)$ is a finite group, and we take a decomposition $G(n)/nG(n)=\oplus_{i=1}^t\langle u_i\rangle$. 
For every $i\in\{1,\dots,t\}$ we consider an epimorphism $\alpha_{i}:W_i\to \langle u_i\rangle$, and for every pair $i,j\in\{1,\dots,t\}$ with $i< j$ an epimorphism $\alpha_{ij}:W_{ij}\to \langle u_i+u_j\rangle$. Since the family $\CW$ is infinite, we can take the groups $W_i$ and $W_{ij}$, $i,j\in\{1,\dots,t\}$, such that there are no non-trivial morphisms between two such groups.

We denote $H=\left(\oplus_{i=1}^{t}W_i\right)\oplus\left(\oplus_{1\leq i<j\leq t} W_{ij}\right)$, and we consider the epimorphism $\alpha:H\to G(n)/nG(n)$  induced by $\alpha_i$ and $\alpha_{ij}$. In order to complete the proof it is enough to prove that $\alpha$ is rigid.

Let $\theta:H\to H$ be and automorphism and $\phi:G(n)/nG(n)\to G(n)/nG(n)$ such that $\alpha\theta=\phi\alpha$ (since $\phi:G(n)/nG(n)$ is finite, $\phi$ is also an automorphism). By the choice of the groups $W_i$ and $W_{ij}$, it follows that $\theta=\left(\oplus_{i=1}^{t}\theta_i\right)\oplus\left(\oplus_{1\leq i<j\leq t} \theta_{ij}\right)$, where $\theta_i=\pm 1_{W_i}$ and $\theta_{ij}=\pm 1_{W_{ij}}$ for all indexes $i$ and $j$. It is easy to see that for every $i$ and $j$ we have $\phi(u_i)=\phi\alpha(w_i)=\alpha\theta(w_i)=\alpha(\pm w_i)=\pm u_i$, where $w_i\in L_i$ is a suitable element. In the same way, for all $1\leq i<j\leq t$ we have $\phi(u_i+u_j)=\pm(u_i+u_j)$.
Suppose that there exist $i\neq j\in\{1,\dots,t\}$ such that $\phi(u_i)=u_i$ and $\phi(u_j)=-u_j$. We can suppose w.l.o.g. that $i<j$.  If $\theta_{ij}=1_{W_{ij}}$ then $\phi(u_i+u_j)=u_i+u_j$, and it follows that $2u_j=0$. Then $\phi(u_j)=u_j$. If $\theta_{ij}=-1_{W_{ij}}$ then $\phi(u_i+u_j)=-(u_i+u_j)$, it follows that 
$\phi(u_i)=-u_i$. It follows that $\phi=\pm 1_{G(n)/nG(n)}$, and the proof is complete. 
\end{proof}

Using \cite[Theorem 2.3]{ABVW} we obtain the following

\begin{corollary}\cite[Theorem 3.4]{ABVW}
Suppose that $G$ is a quotient divisible group such that $\BBQ\End(G)$ is local. Then $G$ has the cancellation property if and only if $G\cong B\oplus\BBZ$, where $B$ is a finite group, or $G$ has the substitution property. 
\end{corollary}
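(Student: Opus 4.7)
The plan is to reduce the forward direction to Proposition \ref{qd} and invoke the cited \cite[Theorem 2.3]{ABVW}, after separating the trivial cases forced by the hypothesis that $\BBQ\End(G)$ is local (equivalently, that $G$ is strongly indecomposable in the quasi-isomorphism category).

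The $(\Leftarrow)$ implication is essentially bookkeeping: the substitution property implies cancellation by the characterization due to Warfield recalled in the introduction, and a group $G\cong B\oplus\BBZ$ with $B$ finite is finitely generated and therefore has cancellation by the classical Krull--Schmidt theory for finitely generated abelian groups. (Under the quotient-divisibility hypothesis this case actually collapses to $G\cong\BBZ$, since if $B\neq 0$ one checks that $(B\oplus\BBZ)/F$ is finite for every full free subgroup $F$, hence cannot be divisible; still, recording the finitely generated case verbatim is harmless.)

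For the $(\Rightarrow)$ direction, suppose $G$ has the cancellation property. First I would eliminate divisible summands: because $T(G)$ is reduced, any divisible subgroup of $G$ is torsion-free, hence isomorphic to $\BBQ^k$ and split off as a direct summand. Local-ness of $\BBQ\End(G)$ forces either $G\cong\BBQ$ — in which case $\End(G)=\BBQ$ is a field and $G$ trivially has the substitution property, so we are done — or $G$ is reduced. Next, if $G$ has $\BBZ$ as a direct summand, write $G=\BBZ\oplus G''$; strong indecomposability forces $G''$ to be quasi-zero, hence torsion, and quotient-divisibility then forces $G''=0$ (otherwise $G/F$ would be finite for every full free $F\leq G$, contradicting the existence of a divisible $G/F$). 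This lands in the stated alternative $G\cong \BBZ$ (with $B=0$).

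In the remaining case $G$ is reduced with no free direct summand, so the hypotheses of Proposition \ref{qd} are met; Proposition \ref{qd} delivers the unit lifting property for $\End_\rmW(G)$. Combined with $\BBQ\End(G)$ local, \cite[Theorem 2.3]{ABVW} then produces the substitution property and finishes the argument. The main obstacle is the exact form of \cite[Theorem 2.3]{ABVW}: the plan presupposes it supplies precisely the implication ``$\End_\rmW(G)$ has the unit lifting property and $\BBQ\End(G)$ is local $\Rightarrow$ $G$ has the substitution property'' for quotient-divisible $G$ without a $\BBZ$-summand. A secondary, more routine, technical point is the elimination of finite complements to a free summand under quotient-divisibility, which is the only place the mixed-rank-one structure enters non-trivially.
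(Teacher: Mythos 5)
Your proposal matches the paper's (implicit) argument: the paper derives this corollary in one line by combining Proposition \ref{qd} with \cite[Theorem 2.3]{ABVW}, exactly as you do, and your additional case analysis only fleshes out what that citation absorbs. One small correction: cancellation for $B\oplus\BBZ$ must hold against arbitrary, not necessarily finitely generated, complements $H$ and $K$, so it rests on the Cohn--Walker cancellation theorem for $\BBZ$ (plus substitution for the finite summand $B$) rather than on Krull--Schmidt for finitely generated groups --- though, as you observe, that case is essentially vacuous under the quotient-divisibility hypothesis.
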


\begin{remark}
It is an open problem to decide if all reduced groups from $\CS$ without free direct summands verify the hypothesis of Proposition \ref{f-lemma}. In \cite{Br-20} it is shown that this property is also valid for some classes of groups that are not necessarily torsion-free nor quotient-divisible. These classes include the class of groups of torsion-free rank at most $3$.  
\end{remark}

\end{document}